\newtheorem{df}{Definition}[section]
\newtheorem{thm}[df]{Theorem}
\newtheorem{pro}[df]{Proposition}
\begin{document}
\setcounter{page}{1}

\title[Lie algebra of Operators]{On the Spectral Set of a  \\ solvable Lie algebra of Operators }
\author{Enrico Boasso}

\begin{abstract}Given $L$  a complex solvable finite dimensional Lie Algebra of operators acting on a
Banach space $E$ and $\{ x_i\}_{1\le i\le n}$  a Jordan-H\"older basis of $ L$, 
we study the relation between $Sp(L,E)$ and $\Pi Sp(x_i)$.  \end{abstract}
\maketitle
\section{Introduction}
\noindent J. L. Taylor developed in [4] a notion of joint spectrum for a $n$-tuple
$a$ = $(a_1,...,a_n)$ of mutually commuting operators acting on a Banach space 
$E$, i. e., $a_i$ $ \in$ ${\mathcal L}(E)$ ($1\le i\le n $), the algebra of all bounded linear operators acting 
on $E$, and $[a_i,a_j]=0$, $1\le i,j\le n $. This interesting notion depends
on the action of the bounded and linear maps $a_i$ on $E$ ($1\le i\le n $) and extends in a natural way the classical definition
of spectrum of a single operator. Taylor's joint spectrum, which we denote by
$Sp(a,E)$, has many remarkable properties, among then the projection property
and the fact that $Sp(a,E)$ is a compact non empty subset of $\mathbb{C}^n$. Another property, in which we are specially interested,  
 is a  well known fact about Taylor's joint spectrum, the relation between 
$Sp(a,E)$ and $Sp(a_i)$, $1\le i\le n$:
$$
Sp(a,E)\subseteq\prod_{i=1}^n Sp(a_i),
$$
where $Sp(a_i)$ denotes the spectral set of $a_i$.\par
\indent In [1] we developed a spectral theory for complex solvable
finite dimensional Lie algebras acting on a Banach space $E$. If $L$ is such
an algebra and $Sp(L,E)$ denotes its spectrum, then $Sp(L,E)$ is a compact
non empty subset of $L^*$ which also satisfies the projection property
for ideals, see [1]. Besides, when $L$ is a commutative algebra, $Sp(L,E)$ reduces to Taylor
joint spectrum in the following sense. If $\dim\, L = n$ and if $\{a_i\}_{(1\le i\le n)}$
is a basis of $L$, we consider the $n$-tuple $a = (a_1,...,a_n)$, then 
$\{(f(a_1),...,f(a_n))\colon f\in Sp(L,E)\} = Sp(a,E)$; i. e., $Sp(L,E)$, in terms
of the basis of $L^*$ dual of $\{a_i\}_{(1\le i\le n)}$, coincides with the Taylor
joint spectrum of the $n$-tuple $a$. Then, the following question arises naturally: if $\{ x_i\}_{(1\le i \le n)}$ is a basis of $L$,
and if we consider, as above, $Sp(L,E)$ in terms of the basis of $L^*$ dual of $\{x_i\}_{(1\le i\le n)}$,
i. e., if we identify $Sp(L,E)$ with its coordinate expresion $\{ (f(x_1),...,f(x_n))\colon f\in Sp(L,E)\}$, 
does $ Sp(L,E)$ satisfy the relation:

$$
\{(f(x_1),...,f(x_n))\colon f\in Sp(L,E)\}\subseteq\prod_{i=1}^n Sp(x_i)?
$$
\indent The answer, even if $\{x_i\}_{(1\le i\le n)}$ is a Jordan-H\"older basis of $L$  (Section 2),
is in general  negative.\par
\indent In this paper we study this problem, i.e., the relation between $Sp(L,E)$ and $\prod_{i=1}^n Sp(x_i)$.
 Refining an idea of [1], we describe this relation by means
of the structure of $L$ in a way that generalizes the well known result
of the commutative case. Furthermore, when $L$ is a nilpotent Lie algebra, in particular when $L$ is a commutative algebra,  we reobtain
the previous inclusion and, when $L$ is a solvable non nilpotent Lie algebra,
we give an example in order to show that our characterization can not be improved.\par
\indent The paper is organized as follows. In Section 2 we review several definitions and results 
that we need for our work.
In Section 3 we prove our main theorems for solvable  and nilpotent Lie algebras. Finally, in Section 4 we give an example in order to show that our characterization can not be improved.\par
\vskip4pt

\section{Preliminaries}

\noindent We briefly recall several definitions and results related to the 
spectrum of solvable Lie algebras of operators, see [1].  From now on, $L$ denotes a complex solvable finite dimensional  
Lie algebra and $E$  a Banach space on which $L$ acts as right continuous linear
operators, i.e., $L$ is a Lie subalgebra of ${\mathcal L}(E)$ with the opposite product. If $ \dim\, (L) = n$ and  $f$ is a character of $L$, i. e.,
$f$ belongs to $L^*$ and $f(L^2) =0$, where $L^2 = \{ [x,y]\colon x,y \in L\}$,
then we consider the following chain complex, $(E\otimes\wedge L,d(f))$, where $\wedge L$ denotes
the exterior algebra of $L$ and $d_{p-1}(f)$ is such that:

$$
d_{p-1}(f)\colon E\otimes\wedge^p L\to E\otimes\wedge^{p-1} L,
$$
 \begin{align*} 
 d_{p-1}(f)e(x_1\wedge\ldots\wedge x_p) &= \sum_{k=1}^{p}(-1)^{k+1}e(x_k-f(x_k))(x_1\wedge\ldots \wedge\Hat {x_k}\wedge\ldots \wedge x_p)\\
                                        &+\sum_{1\le k<l \le p} (-1)^{k+l} e([x_k,x_l]\wedge x_1\ldots
\wedge \Hat {x_k}\wedge \ldots\wedge \Hat {x_l}\wedge\ldots\wedge x_p) ,\\
\end{align*} 
$$
\eqno(1)
$$

\noindent where $\Hat{}$ means deletion, and $e(x_1\wedge\dots\wedge x_p)$ denotes an element of
$E\otimes\wedge^p L$. If $ p \le 0$ or $p> n$, then we also define
$d_p(f)\equiv0.$

\indent Let $H_*(E\otimes\wedge L,d(f))$ denote the homology of the complex
$(E\otimes\wedge L, d(f))$. We now state our first definition.\par
\vskip4pt
\begin{df} With $L$ and $f$ be as above, the set
$\{ f\in L^*\colon f(L^2)=0 ,H_*(E\otimes\wedge L,d(f))\ne 0\}$ is the joint spectrum
of $L$ acting on $E$ and it is denoted by $Sp(L,E)$.\end{df}
\indent As we have said, in [1] we proved that $Sp(L,E)$ is a compact non empty subset of $L^*$,
which reduces  to Taylor joint spectrum, in the sense of the Introduction, when $L$ is a
commutative algebra. Besides, if $I$ is an ideal of $L$ and $\pi$ denotes
the projection map from $L^*$ to $I^*$, then:
$$
Sp(I,E)= \pi (Sp(L,E)),
$$
i. e., the projection property for ideals still holds. With regard to this property,
I ought to mention the paper [3] of C. Ott, who pointed out a gap
in the proof of this result, and gave another proof of it. In any case, 
the projection property remains true.\par
\indent Next we recall several results
which we need for our main theorem. First, as in [1], we consider an $n-1$ dimensional ideal, $L'$, of $L$
and we decompose $E\otimes\wedge^p L $ in the following way:

$$
E\otimes\wedge^p L=\bigl(E\otimes\wedge^p L'\bigr)\oplus
\bigl( E\otimes\wedge^{p-1} L'\bigr)\wedge\langle x\rangle ,  
$$

\noindent where $ x \in L $ and is such that $L'\oplus\langle x\rangle=L$ and $\langle x\rangle$
denotes the one dimensional subspace of $L$ generated by the vector $x$.
 If $\tilde f$ denotes the restriction of $f$ to $L'$, then
we may consider the complex $\bigl(E\otimes\wedge^p L', d(\tilde f)\bigr)$ and, as $L'$ is an ideal of codimension 1 of $L$, we may decompose the
operator $d_p(f)$ as follows:

$$ 
d_{p-1}(f)\colon E\otimes\wedge^p L'\to E\otimes\wedge^{p-1} L ,'
$$

$$
 d_{p-1}(f)= d_{p-1}(\tilde f), \eqno(2)
$$

$$ 
d_{p-1}(f)\colon (E\otimes\wedge^{p-1} L')\wedge\langle x\rangle\to
E\otimes\wedge^{p-1} L'\oplus (E\otimes\wedge^{p-2} L')\wedge\langle x\rangle , 
$$

$$  
d_{p-1}(f)(a\wedge \langle x\rangle)= (-1)^{p+1} L_{p-1}(a)+
(d_{p-2}(\tilde f)(a)) \wedge \langle x\rangle, \eqno(3) 
$$

\noindent where $ a  \in E\otimes\wedge^{p-1} L'$, and $L_{p-1}$ is the
bounded linear endomorphism  defined on $  E\otimes\wedge^{p-1} L'$
by:
 \begin{align*} 
 L_{p-1} e(x_1\wedge\ldots\wedge x_{p-1})
&= e(x-f(x)) (x_1\wedge\ldots \wedge x_{p-1})\\
&+\sum_{1\le k \le p-1} (-1)^{k} e([x,x_k]\wedge x_1\ldots
\wedge \Hat {x_k}\wedge\ldots\wedge x_{p-1}) , \qquad (4)\\ 
\end{align*}

\noindent where $\Hat{}$ means deletion and $\{x_i\}_{(1\le i\le p-1)}$ belongs to $L'$.\par
\indent We use the map $\theta$ defined in [2]. We recall
the main facts which we need for our work. Let $ad(x)$, $x\in L$, be the derivation
of $L$ defined by : 
$$
ad(x)(y)= [x,y] ,\qquad\qquad (y\in L),
$$
then $\theta (x)$ is the derivation of $\wedge L$ which extends $ad(x)$, and is defined by:
$$ 
\theta(x) (x_1\wedge\ldots\wedge x_p)=
\sum_{i=1}^p (x_1\wedge\ldots\wedge ad(x)(x_i)\wedge\ldots
\wedge x_p). \eqno(5)                      
$$

\indent Observe that if we consider the map $1_E\otimes\theta(x)$,
which we still denote  by $\theta(x)$, then
\begin{align*}
L_{p-1} e(x_1\wedge\ldots\wedge x_{p-1})
&=  e(x-f(x)) (x_1\wedge\ldots\wedge x_{p-1})\\
& -\theta(x) e(x_1\wedge\ldots\wedge x_{p-1}).\qquad\qquad\qquad\qquad\qquad\qquad (6)\\ 
\end{align*}

\indent Finally, as $L$ is a complex solvable finite dimensional Lie algebra, it is well known that
there is a Jordan-H\"older sequence of ideals such that:\par
\noindent (i) $\{0\} =L_0\subseteq L_i\subseteq L_n =L$,\par
\noindent (ii) $\dim\,  L_i=i$,\par
 \noindent (iii) there is a $k$, $0\le k\le n$, such that $L_k =L^2 =[L,L]$.\par
\indent  As a consequence, if we consider a basis of $L$, $\{x_j\}_{(1\le j\le n)}$, such that
$\{x_j\}_{( 1\le j\le i)}$ is a basis of $L_i$, then we have:
$$
[x_j, x_i] =\sum_{h=1}^i c^h_{ij} x_h \qquad\qquad  (i < j).\eqno(7) 
$$
\indent Such a basis is a Jordan-H\"older basis of $L$.\par
\indent  In addition, if $L$ is a nilpotent Lie algebra, we may add the condition:
\noindent (iv) $[L,L_i]\subseteq L_{i-1}.$\par
\indent Then, in terms of the previous basis, we have:
$$
[x_j, x_i] =\sum_{h=1}^{i-1} c^h_{ij} x_h \qquad\qquad    (i < j).
\eqno(8)
$$

\section{ The Spectral Set}

\noindent First we give a definition that we need for our main theorems. We consider
 for $ p$ such that $0\le p\le n-1$, the set of p-tuples of $[\![ 1,n-1 ]\!]$, $I_p$ ,  defined as follows. If $p =0$,
$$
I_0 =\{1\},
$$ 
and for $p$ such that $1\le p \le n-1$,
$$
I_p =\{ (i_1,\ldots,i_p)\colon 1\le i_1 <\ldots < i_j < \ldots < i_p\le n-1\}.
$$
\indent We observe that $I_p$ has a natural order.\par    
\indent If $\alpha = (i_1,\ldots,i_p)$ and $\beta =(j_1,\ldots,j_p)$
belong to $I_p$, let $k = min\{ l\colon i_l\ne j_l\}$, then:\par
\noindent (i) $i_l=j_l$, $1\le l\le k-1$,\par
\noindent (ii) $i_k\ne j_k$.\par
If $i_k< j_k$ (respectively $j_k <i_k$) we put $\alpha < \beta$ (respectively 
$\beta <\alpha)$.\par
\indent Now, given $L$, $L'$, $x$, and $E$ are as in Section 2, let us consider a sequence  $\{x_i\}_{(1\le i\le n-1)}$ of elements of $L'$. 
 If $\alpha = (i_1,\ldots ,i_p)$ belongs to $I_p$, then
we  denote $ (x_{i_1}\wedge\ldots\wedge x_{i_p})$ by $(x_{\alpha})$, i.e.,
$$
(x_{\alpha})= (x_{i_1}\wedge\ldots\wedge x_{i_p}).
$$
\noindent In particular, when $p=0$, we denote $(x_0)$  by $ ( 1)$, i.e., 
$(x_0) = ( 1)$.\par

\indent In addition, as $L'$ is an ideal of $L$, $\theta (x)(\wedge L')\subseteq\wedge L'$.
Thus, we have a well defined map, which we still denote by $\theta (x)$:
$$
\theta (x)\colon E\otimes\wedge  L' \to  E\otimes\wedge L'.
$$
\indent Now, if $(L_i)_{(0\le i\le n)}$ is a Jordan-H\"older sequence of
$L$ and $\{ x_i\}_{(1\le i\le n)}$ is a Jordan-H\"older basis of $L$
associated to $(L_i)_{(0\le i\le n)}$, then we set $L'= L_{n-1}$ and $x= x_n$.\par

\indent In order to prove the following proposition we need to associate to each $\alpha$ $\in I_p$, $0\le p\le n-1$, a number $r_{\alpha}$. If $\alpha $ belongs to $I_p $, $ \alpha = (i_1,\ldots ,i_p)$, and $[x_n,x_{i_k}]=\sum_{h=1}^{i_k} c^{h}_{i_k n} x_h$,
we define for $p$ such that $1\le p\le n-1$, $r_{\alpha}= \sum^p_{k=1} c^{i_k}_{i_k n}$,
and if $p=0$, we  define $r_1=0$. Then a standard calculation shows that:
\begin{align*}
\theta (x)e(x_{\alpha}) &= X +( \sum_{k=1}^p c^{i_k}_{i_k n})e(x_{\alpha})\\
                                     &= X + r_{\alpha}e(x_{\alpha}),   
 \end{align*}
where X belongs to $\bigoplus_{\beta< \alpha} E(x_{\beta})$. \par
\indent Besides, as $x_n$ acts on $E$, $\overline x_n = x_n\otimes 1-1\otimes \theta (x_n)$
acts on $E\otimes\wedge L_{n-1}$ in a natural way, where $1$ denotes the identity map of 
the corresponding spaces. Let us compute $Sp(\overline x_n, E\otimes\wedge L_{n-1})$,
i. e., the spectrum of $\overline x_n$ in $E\otimes\wedge L_{n-1}$. If we decompose $E\otimes\wedge L_{n-1}$
by means of $E(x_{\alpha})$, $\alpha \in I_p$, $0\le p\le n-1$, then we have that $E\otimes\wedge L_{n-1} = \oplus_{(\alpha \in I_p, \hbox{ }0\le p\le n-1)} E(x_{\alpha})$.
Now, as $\theta(x_n)$ acting on $\wedge L_{n-1}$ has an upper triangular form
with diagonal entries $r_{\alpha}$, $\overline x_n$ has  in the above decomposition  an
upper triangular form with diagonal entries $x_n -r_{\alpha}$, thus, $Sp(\overline x_n,E\otimes\wedge L_{n-1})= Sp(x_n)-\{r_{\alpha}\colon \alpha\in I_p, 0\le p\le n-1\}$. 
Finally, we observe that the spectrum of $\overline x_n$ depends on the
structure of $L$ as a Lie algebra, and that in the commutative case, $\overline x_n= x_n\otimes 1$ 
and $Sp(\overline x_n,E\otimes \wedge L_{n-1}) = Sp(x_n)$.\par
\indent The first step to our main theorem is Proposition 1. 
\vskip4pt
\begin{pro} Let $ L$ be a complex  solvable finite dimensional Lie algebra, acting
 as right continuous linear operators on a
Banach space $E$. Let $(L_i)_{(0\le i \le n)}$ be a Jordan-H\"older sequence of $L$
and consider $\{x_i\}_{(1\le i \le n)}$,  a basis associated to this sequence. Then, if $f$  is a character of $L$ such that 
$$
f(x_n)\notin Sp(\overline x_n,E\otimes\wedge L_{n-1}),
$$
$f$ does not belong to $Sp(L,E)$.\end{pro}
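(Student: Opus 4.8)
The plan is to show that the hypothesis forces the complex $(E\otimes\wedge L, d(f))$ to be exact, so that $f\notin Sp(L,E)$ by the very definition of the joint spectrum. Write $L'=L_{n-1}$, $x=x_n$, $\tilde f = f|_{L'}$ and $\lambda = f(x_n)$, and recall from Section 2 the codimension-one decomposition $E\otimes\wedge^p L = (E\otimes\wedge^p L')\oplus(E\otimes\wedge^{p-1}L')\wedge\langle x\rangle$ together with the block form (2) and (3) of $d_{p-1}(f)$: it equals $d_{p-1}(\tilde f)$ on $E\otimes\wedge^p L'$, and it sends $a\wedge\langle x\rangle$ to $(-1)^{p+1}L_{p-1}(a)+(d_{p-2}(\tilde f)(a))\wedge\langle x\rangle$. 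By formula (6), $L_{p-1}=\overline x_n-\lambda$ as a bounded endomorphism of $E\otimes\wedge^{p-1}L'$.

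Next I would make a mapping-cone structure explicit. Put $A=(E\otimes\wedge L',d(\tilde f))$ and $B=(E\otimes\wedge L,d(f))$. By (2) the subspaces $E\otimes\wedge^p L'\subseteq E\otimes\wedge^p L$ form a subcomplex of $B$ which is exactly $A$, and by (3) the quotient complex is $A$ shifted by one degree: its degree-$p$ component is $(E\otimes\wedge^{p-1}L')\wedge\langle x\rangle\cong E\otimes\wedge^{p-1}L'$ with differential induced by $d_{p-2}(\tilde f)$, so its $p$-th homology is $H_{p-1}(A)$. A short computation with the connecting homomorphism $\partial$ of this short exact sequence of complexes shows that, under the identification above, $\partial$ is (up to sign) the map induced by $L_{p-1}=\overline x_n-\lambda$; in particular $\overline x_n-\lambda$ is a chain endomorphism of $A$, a fact that may also be read off directly from $d(f)^2=0$ together with (2), (3), (6) and is in essence the statement that $\overline x_n$ commutes with $d(\tilde f)$. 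The long exact homology sequence then reads
$$\cdots\to H_p(A)\to H_p(B)\to H_{p-1}(A)\xrightarrow{\ \pm(\overline x_n-\lambda)_{*}\ }H_{p-1}(A)\to H_{p-1}(B)\to\cdots.$$

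Finally I would use the hypothesis. Since $E\otimes\wedge L_{n-1}=E\otimes\wedge L'$, the assumption $f(x_n)=\lambda\notin Sp(\overline x_n,E\otimes\wedge L_{n-1})$ means precisely that $\overline x_n-\lambda$ is an invertible bounded operator on $E\otimes\wedge L'$; as $E$ (hence $E\otimes\wedge L'$) is a Banach space its inverse is bounded, and being the inverse of a chain map it is again a chain endomorphism of $A$. Thus $\overline x_n-\lambda$ is a chain isomorphism of $A$ onto itself and induces an isomorphism on every $H_q(A)$. Feeding this into the long exact sequence above makes the maps $H_p(A)\to H_p(B)$ and $H_p(B)\to H_{p-1}(A)$ both vanish, whence $H_p(B)=0$ for all $p$; therefore $H_*(E\otimes\wedge L,d(f))=0$ and $f\notin Sp(L,E)$. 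I expect the only delicate point to be the compatibility invoked in the middle paragraph — that $\overline x_n$ commutes with $d(\tilde f)$, so that the cone description of $B$ is legitimate — but this is forced by $d(f)^2=0$ and the explicit block formulas rather than being a genuine obstacle; everything else is the standard long exact sequence of a mapping cone.
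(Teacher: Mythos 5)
Your argument is correct, but it reaches the conclusion by a different mechanism than the paper. You read the decomposition (2)--(3) as exhibiting $(E\otimes\wedge L,d(f))$ as the mapping cone of the chain endomorphism $\overline x_n-f(x_n)$ of $(E\otimes\wedge L_{n-1},d(\tilde f))$ (the identification $L_{p-1}=\overline x_n-f(x_n)$ being formula (6), and the chain-map property being the intertwining $d_{p}(\tilde f)\,L_{p+1}=L_{p}\,d_{p}(\tilde f)$ forced by $d(f)^2=0$), and you then conclude from the long exact homology sequence: the connecting map is, up to sign, induced by the invertible operator $\overline x_n-f(x_n)$, hence bijective in every degree, so $H_*(E\otimes\wedge L,d(f))=0$. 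The paper instead stays entirely at the chain level: it shows that each degreewise restriction $L_p$ is invertible by exhibiting an upper triangular form with diagonal entries $x_n-r_\alpha-f(x_n)$ (the same computation that describes $Sp(\overline x_n,E\otimes\wedge L_{n-1})$ in terms of $Sp(x_n)$ and the numbers $r_\alpha$), and then writes down an explicit contracting homotopy, $S_p=(-1)^pL_p^{-1}\wedge(x_n)$ on $E\otimes\wedge^pL_{n-1}$ and $0$ on the complementary summand, verifying $d_pS_p+S_{p-1}d_{p-1}=I$ via the relation $d_pL_{p+1}=L_pd_p$ obtained from (2) and (3). Your route is more conceptual, avoids the sign bookkeeping of the homotopy identity, and actually proves slightly more, since it only needs $\overline x_n-f(x_n)$ to induce isomorphisms on the homology of the subcomplex rather than to be invertible; the paper's route is elementary and self-contained, and its triangular computation doubles as the spectral description used in Theorem 1. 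The only small point you should make explicit is that $\overline x_n-f(x_n)$ preserves each graded piece $E\otimes\wedge^pL_{n-1}$ (both $x_n\otimes 1$ and $1\otimes\theta(x_n)$ do), so that invertibility on all of $E\otimes\wedge L_{n-1}$ restricts degreewise and the inverse is again a degree-preserving chain map; this is immediate and does not affect the argument.
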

\begin{proof}
\indent First we decompose $E\otimes\wedge^p L$ as in Section 2:
$$E\otimes\wedge^p L=\bigl(E\otimes\wedge^p L_{n-1}\bigr)\oplus
\bigl( E\otimes\wedge^{p-1} L_{n-1}\bigr)\wedge\langle x_n\rangle. $$
\indent As $L_{n-1}$ is an ideal of $L$, $ad(x_n)(L_{n-1})\subseteq L_{n-1}$ and
 $$
\theta(x_n) (E\otimes\wedge^{p-1} L_{n-1}) \subseteq E\otimes\wedge^{p-1} L_{n-1}.
$$
Then, by (4) and (5),
$$
L_{p-1} =(x_n-\theta(x_n))-f(x_n).
$$
\indent  Moreover, if we decompose $E\otimes\wedge^{p-1} L_{n-1}$ by means of $ E (x_{\alpha})$, it is obvious that:
$$
E\otimes\wedge^{p-1} L_{n-1} =\bigoplus_{\alpha \in I_{p-1}} E(\alpha).
$$
Then, by the previous considerations and the above formula, $L_{p-1}$ is an upper triangular matrix with diagonal entries $(x_n-r_{\alpha})-f(x_n)$
associated to $\alpha =(i_1,\ldots,i_{p-1}) \in I_{p-1}$.Thus, if $f$ satisfies the hypothesis,
 $L_p$ is an invertible operator for each  $p, 0\le p\le n-1.$\par
\indent We now construct a homotopy operator, $(S_p)_{ p\in Z}$, for
the complex $( E\otimes\wedge L, d(f))$, in order to show that
$H_*(E\otimes\wedge L,d(f))=0$, which is equivalent to $f\notin Sp(L,E)$.\par
\vskip4pt
\indent $S_p$ is a map from $E\otimes\wedge^p L$ to $E\otimes\wedge^{p+1} L$; we define it 
as follows:
$$
S_p\colon E\otimes\wedge^p L\to E\otimes\wedge^{p+1} L,
$$
if $p<0$ or $p>n-1$, we define $S_p\equiv 0$, if $p$ is such that $0\le p\le n-1$,
we consider the decomposition of $E\otimes\wedge^p L$ set at the beginning of the proof,
and we define
$$
S_p( E\otimes\wedge^{p-1} L_{n-1}\wedge \langle x\rangle)=0,
\eqno(9)
$$
and $S_p$ restricted to $E\otimes\wedge^p L_{n-1}$ satisfies:

$$
S_p(E\otimes\wedge^p L_{n-1})\subseteq E\otimes\wedge^p L_{n-1}\wedge\langle x_n\rangle,
$$

$$
S_p=(-1)^{p} L_p^{-1}\wedge (x_n).\eqno(10)
$$

\indent In order to verify that $S_p$ is a homotopy operator we prove the following formula:

$$
S_p d_p L_{p+1} = (-1)^{p} d_p\wedge (x_n).
\eqno(11)
$$
\indent By (2) and (3), we have
\begin{align*}
d_p L_{p+1} &= d_p (( d_{p+1} -d_p\wedge (x_n))(-1)^{p+3})\\
            &= (-1)^{p} d_p (d_p\wedge (x_n))\\
            &= (-1)^{p} (-1)^{p+2} L_p d_p\\
            &= L_p d_p.\\
\end{align*}
Then,
$$
d_p L_{p+1} =  L_p d_p.   
$$
Thus,
$$
S_p d_p L_{p+1} = S_p L_p d_p = (-1)^{p} d_p\wedge (x_n).
$$
\indent Now, by means of formulas (9), (10) and (11), an easy calculation shows that 
$$
d_pS_p +S_{p-1}d_{p-1}=I,
$$
for $p\in Z$, i. e., $(S_p)_{ p\in Z}$ is a homotopy operator .
\end{proof}

\vskip4pt
\indent In order to state our main theorem, we consider the basis $\{x_i\}_{(1\le i\le n)}$ of (7), and we
apply the definition of the beginning of the paragraph to $L_j$, the ideal generated by $\{x_i\}_{(1\le i\le j)}$,  $1\le j\le n$.
We denote by $I_p^j$, $0\le p\le j-1$, $ 1\le j\le n$, the set of p-tuples associated
to $L_j$  and the ideal $L_{j-1}$, and if $\alpha$ belongs to $I_p^j$  we denote by $r^j_{\alpha}$ the complex number associated to $\alpha$.
 In addition, we observe that in Theorem 1 and 2 below, we consider the set
$Sp(L,E)$ in terms of the basis of $L^*$ dual of $\{ x_i\}_{(1\le i\le n)}$,
i. e., we identify $Sp(L,E)$ with its coordinate expression in the mentioned basis: $\{(f(x_1),...,f(x_n))\colon f\in Sp(L,E)\}$.\par
\vskip4pt
\indent Now we state our main theorem.\par
\begin{thm} Let $L$ be a complex  solvable finite dimensional Lie
algebra, acting as right continuous linear operators on a Banach space $E$. Let $(L_i)_{(0\le
i\le n)}$ be a Jordan-H\"older sequence of $L$ and consider $\{x_i\}_{(1\le i\le n)}$,
a basis associated to this sequence. Then, in terms of the basis of $L^*$ dual of
$\{ x_i\}_{( 1\le i\le n)}$, 
$$
Sp(L,E)\subseteq \prod_{1\le j\le n} Sp(\overline x_j, E\otimes\wedge L_{j-1}).
$$
\end{thm}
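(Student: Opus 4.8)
The plan is to prove the inclusion by induction on $n = \dim L$, exploiting the projection property for ideals together with Proposition 1. The base case $n=1$ is trivial, since then $L = L_1 = \langle x_1 \rangle$ is one-dimensional and commutative, $L_0 = \{0\}$, so $\overline{x}_1 = x_1$ and $Sp(L,E) = Sp(x_1) = Sp(\overline{x}_1, E\otimes\wedge L_0)$ — an equality, in fact. For the inductive step, I would take $L$ of dimension $n$, set $L' = L_{n-1}$ (an ideal of codimension one) and $x = x_n$, and note that $\{x_i\}_{(1\le i\le n-1)}$ is a Jordan-Hölder basis of $L'$ associated to the truncated Jordan-Hölder sequence $(L_i)_{(0\le i\le n-1)}$.

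**Next I would** handle the $n$-th coordinate. Let $f \in Sp(L,E)$; I must show $f(x_n) \in Sp(\overline{x}_n, E\otimes\wedge L_{n-1})$ and, for each $j < n$, $f(x_j) \in Sp(\overline{x}_j, E\otimes\wedge L_{j-1})$. The first is exactly the contrapositive of Proposition 1: if $f(x_n) \notin Sp(\overline{x}_n, E\otimes\wedge L_{n-1})$ then $f \notin Sp(L,E)$. For the remaining coordinates, I would invoke the projection property: if $\pi\colon L^* \to (L')^*$ is the restriction map and $\tilde{f} = \pi(f)$, then $\tilde{f} \in Sp(L',E) = \pi(Sp(L,E))$. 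By the inductive hypothesis applied to $L'$ (which is a complex solvable finite-dimensional Lie algebra of dimension $n-1$ acting on $E$, with the stated Jordan-Hölder data), we get, in terms of the dual basis,
$$
\tilde{f} \in Sp(L',E) \subseteq \prod_{1\le j\le n-1} Sp(\overline{x}_j, E\otimes\wedge L_{j-1}),
$$
where here $\overline{x}_j$ and the ideals $L_{j-1}$ for $j \le n-1$ are computed inside $L'$. The key point is that these objects are intrinsic to the truncated sequence: the number $r^j_\alpha$ for $\alpha \in I_p^j$ with $j \le n-1$ depends only on the structure constants $c^h_{jk}$ with $h,k \le j-1 < n$, which are the same whether computed in $L$ or in $L'$; likewise $\theta(x_j)$ restricted to $\wedge L_{j-1}$ is the same. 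Hence $Sp(\overline{x}_j, E\otimes\wedge L_{j-1})$ is unambiguous, and since $\tilde{f}(x_j) = f(x_j)$ for $j \le n-1$, combining the two facts yields $f \in \prod_{1\le j\le n} Sp(\overline{x}_j, E\otimes\wedge L_{j-1})$.

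**The main obstacle** I anticipate is the bookkeeping needed to verify that the sets $Sp(\overline{x}_j, E\otimes\wedge L_{j-1})$ genuinely coincide when viewed inside $L'$ versus inside $L$, so that the inductive hypothesis plugs in cleanly — this is the place where one must be careful that "$\overline{x}_j$" and "$r^j_\alpha$" are well-defined notions attached to the flag and not to the ambient algebra. A second, smaller point is making sure the identification of $Sp(L,E)$ and $Sp(L',E)$ with their coordinate expressions is compatible with the projection map $\pi$, i.e. that $\pi$ in coordinates is simply the deletion of the last coordinate; this is immediate since $\{x_i\}_{(1\le i\le n-1)}$ is the portion of the basis spanning $L'$. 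Once these compatibility checks are in place, the induction closes and the theorem follows.
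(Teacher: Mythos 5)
Your proposal is correct and follows essentially the same route as the paper, which proves the theorem by induction using Proposition 1 for the top coordinate and the projection property for ideals (Theorem 3 of [1]) applied to $L_{n-1}$ for the rest; you have merely spelled out the compatibility checks (that $\overline x_j$ and $r^j_\alpha$ are intrinsic to the flag) that the paper leaves implicit.
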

\vskip4pt
 \begin{proof}
\vskip4pt
\indent By  means of an induction argument, the proof is a consequence of Proposition 1
and Theorem 3 of [1].
\end{proof}

\indent In the case of a nilpotent Lie algebra, Theorem 2 extends directly the commutative case.\par
\begin{thm} Let $L$ be a complex nilpotent finite dimensional Lie
algebra, acting as right continuous linear operators on a Banach space $E$. Let $(L_i)_{(0\le i\le n)}$
be a Jordan-H\"older sequence of $L$ and consider $\{x_i\}_{(1\le i\le n)}$, a basis
associated to this sequence. Then in terms of the basis of $L^*$
dual of $\{ x_i\}_{( 1\le i\le n)}$,
$$
Sp(L,E)\subseteq \prod_{i=1}^n Sp(x_i).
$$
 In particular,
$$
Sp(L,E)\subseteq\{ f\in L^*\colon f(L^2)=0, \hbox{ }  \parallel f(x)\parallel\leq\parallel x\parallel_{{\mathcal L}(E)}, \hbox{ }\forall x\in L\}. 
$$
\end{thm}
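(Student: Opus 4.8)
The plan is to obtain both assertions from Theorem 1. Recall that, applying the computation carried out before Proposition 1 to the ideal $L_{j-1}$ of $L_j$,
$$
Sp(\overline x_j, E\otimes\wedge L_{j-1}) = Sp(x_j) - \{ r^j_\alpha\colon \alpha\in I^j_p,\ 0\le p\le j-1\},
$$
so the first inclusion will follow once I check that, for a nilpotent $L$, every $r^j_\alpha$ is $0$: then each set $\{r^j_\alpha\}$ reduces to $\{0\}$, each factor of Theorem 1 becomes $Sp(x_j)$, and $Sp(L,E)\subseteq\prod_{j=1}^n Sp(x_j)$.

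To verify $r^j_\alpha=0$, fix $j$ with $1\le j\le n$ and $\alpha=(i_1,\ldots,i_p)\in I^j_p$ with $1\le p\le j-1$ (for $p=0$ one has $r^j_\alpha=0$ by definition). By definition $r^j_\alpha=\sum_{k=1}^p c^{i_k}_{i_k j}$, where $[x_j,x_{i_k}]=\sum_h c^h_{i_k j}x_h$. Since $i_k\le j-1<j$, condition (iv) of the Jordan-H\"older sequence, equivalently formula (8), gives $[x_j,x_{i_k}]\in L_{i_k-1}$, so the coefficient $c^{i_k}_{i_k j}$ of $x_{i_k}$ vanishes; hence $r^j_\alpha=0$. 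This yields $Sp(\overline x_j,E\otimes\wedge L_{j-1})=Sp(x_j)$ for every $j$, and with it the first inclusion of the theorem.

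For the second inclusion, let $f\in Sp(L,E)$; since $f$ is a character, $f(L^2)=0$. Let $x\in L$ with $x\ne 0$. Because $L$ is nilpotent there is a Jordan-H\"older sequence $(L_i)$ satisfying (i)--(iv); let $k$ be least with $x\in L_k$, so that $x\in L_k\setminus L_{k-1}$. As $\dim L_k=\dim L_{k-1}+1$, I may pick a basis of $L_{k-1}$ adapted to the truncated sequence, adjoin $x$ to obtain a basis of $L_k$, and continue upward to a Jordan-H\"older basis $\{x_i\}_{1\le i\le n}$ of $L$ with $x_k=x$; being adapted to a flag of ideals satisfying (iv), this basis obeys (7) and (8). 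Applying the inclusion just proved to this basis gives $f(x)=f(x_k)\in Sp(x_k)=Sp(x)$, and since $Sp(x)$ lies in the closed disc of radius $\|x\|_{{\mathcal L}(E)}$ we conclude $\|f(x)\|\le\|x\|_{{\mathcal L}(E)}$, as required.

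The single step that genuinely uses the hypothesis is the vanishing of the numbers $r^j_\alpha$, and I expect it to be the heart of the argument: it is exactly condition (iv) (formula (8)) that kills the diagonal of $\theta(x_j)$ acting on $\wedge L_{j-1}$, so that the perturbed operator $\overline x_j$ no longer displaces the spectrum of $x_j$. The only other point needing a word of care is that every nonzero $x\in L$ can be taken as a member of a Jordan-H\"older basis; this rests once more on nilpotency, through the existence of a flag of ideals with property (iv).
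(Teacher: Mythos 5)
Your proof is correct and takes essentially the same route as the paper: nilpotency (condition (iv), i.e.\ formula (8)) forces every $r^j_{\alpha}=0$, so each factor $Sp(\overline x_j,E\otimes\wedge L_{j-1})$ in Theorem 1 collapses to $Sp(x_j)$. The paper states the ``in particular'' clause without further comment; your extra step---extending an arbitrary nonzero $x\in L$ to a Jordan-H\"older basis so that $f(x)\in Sp(x)$ and hence $\vert f(x)\vert\le\Vert x\Vert_{{\mathcal L}(E)}$---is a correct way to supply the omitted detail.
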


\begin{proof}\endproof

\indent As $L$ is a nilpotent Lie algebra, we may consider a Jordan-H\"older sequence of $L$, $ (L_j)_{ ( 0\le j\le n)} $,
 such that $[L,L_j]\subseteq L_{j-1}$. Then for each $\alpha\in I_p^j$, $1\le j\le n$, 
$0\le p\le j-1$, we have:
$$
r^j_{\alpha}=0,   
$$
which implies that $Sp(x_i) = Sp(\overline x_i, E\otimes\wedge L_{i-1})$. Thus, by means of Theorem 1 we conclude the proof.

\end{proof}
\section{ An Example}

\noindent We give an example in order to see that  Theorem 1 can not be, in general,
improved. We consider the solvable Lie algebra $G_2$ on two generators,
$$
G_2 = \langle y\rangle\oplus\langle x\rangle,
$$
with relations $[x,y]=y$. Then, by Theorem 1,
$$
Sp(G_2,E)\subseteq Sp(y)\times (Sp(x)\cup Sp(x)-1).
$$
\indent Now, if $E= \mathbb{C}^2$, and $y$ and $x$  are the following matrices:

$$
y= \begin{pmatrix}
            1&1\\
            -1&-1\\
    \end{pmatrix}, \hbox{                  }
x=\begin{pmatrix}
            0& 1/2\\
            1/2 &0 \\
   \end{pmatrix},
$$

\noindent then, $L =\langle y\rangle\oplus\langle x\rangle$ defines a Lie
subalgebra of $\mathcal L(\mathbb{C}^2)$ isomorphic to $G_2$, and an easy calculation shows that

$$
   Sp(G_2, \mathbb{C}^2) = \{0\}\times\{1/2, -3/2\}.
$$

However, as $Sp(x)= \{1/2,-1/2\}$, and $Sp(y)= 0$,  Theorem 1 cannot be,
in general, improved.

\bibliographystyle{amsplain}

\vskip.5cm

Enrico Boasso\par
E-mail address: enrico\_odisseo@yahoo.it

\end{document}